\documentclass[11pt]{amsart}
\pdfoutput=1

\usepackage{amssymb}
\usepackage{microtype}
\usepackage{tikz}
\usetikzlibrary{arrows}
\usepackage{booktabs}
\usepackage[pdftex,colorlinks,citecolor=black,linkcolor=black,urlcolor=black,bookmarks=false]{hyperref}

\usepackage{doi}

\newtheorem{theorem}{Theorem}[section]
\newtheorem{proposition}[theorem]{Proposition}

\theoremstyle{definition}

\numberwithin{figure}{section}
\numberwithin{equation}{section}
\numberwithin{table}{section}

\newcommand{\R}{\mathbb{R}}

\newcommand{\SL}{\operatorname{SL}}
\newcommand{\GL}{\operatorname{GL}}

\newcommand{\SO}{\operatorname{SO}}
\newcommand{\Sp}{\operatorname{Sp}}

\newcommand{\tr}{\operatorname{tr}}

\title{Fricke's trace identity and spin groups}

\author{Matthew de Courcy-Ireland}
\address{Department of Mathematics\\
Stockholm University} \email{matthew.decourcy-ireland@math.su.se}

\date{July 20, 2026}

\begin{document}

\maketitle
\begin{abstract}
We give a proof of Fricke's trace identity using the exceptional spin double cover of an orthogonal group in four variables.
We also explain how Fricke's identity is related to the double-angle formula from trigonometry as well as an identity for symplectic matrices.
\end{abstract}
\section{Introduction}

One of the low-dimensional exceptions of Lie theory is the spin double cover from $\SL_2 \times \SL_2$ to an orthogonal group in four variables.
The purpose of this note is to use this feature of $\SL_2 \times \SL_2$ to give a proof of Fricke's trace identity.
This identity holds, over any commutative ring with $1$, for any pair of matrices $(A,B)$ in $\SL_2 \times \SL_2$.
It states that
\begin{equation} \label{eqn:fricke}
(\tr{A})^2+(\tr{B})^2+(\tr{AB})^2 = (\tr{A})(\tr{B})(\tr{AB})+\tr(ABA^{-1}B^{-1})+2.
\end{equation}
Fricke's identity can be reformulated as
\begin{equation} \label{eqn:fricke-symm}
(\tr{A})^2+(\tr{B})^2-2 = \tr(AB)\big((\tr{A})(\tr{B})-\tr(AB)\big)+\tr(ABA^{-1}B^{-1})
\end{equation}
which helps to underscore some of the cancellation involved.
Each term on the left is invariant under separate conjugations $(A,B) \mapsto (g_1^{-1}Ag_1, g_2^{-1}Bg_2)$
whereas the terms on the right are not individually preserved unless $g_1=g_2$.
Nevertheless, combining these terms recovers the full symmetry.

Fricke's identity plays a fundamental role in the theory of character varieties and Markoff cubic surfaces (see for example \cite{goldman}).
A noncommutative generalisation has been given in \cite[Corollary 4.7]{GKW}.
For the standard textbook proof of Fricke's identity, one can refer to \cite[p.65, proof of Proposition 4.3]{aigner}.
It obtains (\ref{eqn:fricke}) by repeatedly applying the relation (for $X$, $Y$ in $\SL_2$)
\begin{equation} \label{eqn:trace-inverse}
\tr(XY)+\tr(XY^{-1})=(\tr{X})(\tr{Y}).
\end{equation}

The proof given here instead deduces (\ref{eqn:fricke-symm}) by comparing two different ways of calculating the characteristic polynomial of the spin image of $(A,B)$.
In Section~\ref{sec:sl2sl2spin}, we review the interpretation of $\SL_2 \times \SL_2$ as a spin group.
In Section~\ref{sec:profricke}, we calculate the characteristic polynomial and prove Fricke's identity.
In Section~\ref{sec:smaller-examples}, we show how this approach is related to the classical double-angle formula for cosines, and to the lower-dimensional exceptional spin covers.
Finally, in Section~\ref{sec:symplectic}, we discuss how Fricke's identity fits into the last of the exceptional spin covers.

\section{$\SL_2 \times \SL_2$ as a spin group} \label{sec:sl2sl2spin}

In this section, we recall the exceptional spin covering $\SL_2 \times \SL_2 \rightarrow SO(2,2)$.
The orthogonal group $\SO_n(\R)$ has a double cover, called a spin group.
For $n \leq 6$, there are exceptional isomorphisms between these spin groups and other Lie groups.
We are interested especially in $n=4$ and, although we are not necessarily working over the real numbers, we will see that signature $(2,2)$ comes naturally from the construction (see (\ref{eqn:det-22}) below).
We will meet the cases $n=5,6$ again later in Section~\ref{sec:symplectic}.

In any dimension, $\SL_n \times \SL_n$ preserves the determinant:
\[
\det(g_1 m g_2) = \det(g_1) \det(m) \det(g_2) = \det(m)
\]
for any $n\times n$ matrix $m$.
The special feature of $n=2$ compared to higher dimensions is that $\det(m)$ is a quadratic form. The action therefore gives a map from $\SL_2 \times \SL_2$ to an orthogonal group.

To keep the action on the left, we define
\[
(g_1,g_2) \cdot m = g_1 m g_2^{-1}
\]
where $\det(m)$ is preserved by $\SL_n$.
Scalars act trivially.
For $\SL_2$, this means $(-g_1,-g_2)$ acts the same way as $(g_1,g_2)$.

We will take $(p,q,r,s)$ in that order as the variables in the quadratic form
\[
ps-qr = \det \begin{pmatrix} p & q \\ r & s \end{pmatrix}.
\]
The action is given by
\begin{align*}
g_1 m g_2^{-1} &= \begin{pmatrix} a & b \\ c & d \end{pmatrix} \begin{pmatrix} p & q \\ r & s \end{pmatrix} \begin{pmatrix} d' & -b' \\ -c' & a' \end{pmatrix} \\
&= \begin{pmatrix} apd' + brd' - aqc' -bsc' & -apb' -brb'+aqa'+bsa' \\ cpd' + drd' -cqc' -dsc' & -cpb'-drb'+cqa'+dsa' \end{pmatrix}
.
\end{align*}
On column vectors, $(g_1,g_2)$ acts by
\[
\begin{pmatrix} p \\ q \\ r \\ s \end{pmatrix}
\mapsto
\begin{pmatrix} ad' & - ac' & bd' & -bc' \\ -ab' & aa' & -bb' & ba' \\ cd' & -cc' & dd' & -dc' \\ -cb' & ca' & -db' & da' \end{pmatrix}
\begin{pmatrix} p \\ q \\ r \\ s \end{pmatrix}
.
\]
We will write
\begin{equation} \label{eqn:u}
u=u(g_1,g_2) = \begin{pmatrix} ad' & - ac' & bd' & -bc' \\ -ab' & aa' & -bb' & ba' \\ cd' & -cc' & dd' & -dc' \\ -cb' & ca' & -db' & da' \end{pmatrix} = \big[u_{ij} \big]_{i,j=1}^{4}
\end{equation}
or in block form
\begin{equation} \label{eqn:u-block}
u = \begin{pmatrix} a g_2^{-\top} & b g_2^{-\top} \\ c g_2^{-\top} & dg_2^{-\top}  \end{pmatrix}
= g_1 \otimes g_2^{-\top}.
\end{equation}
It is possible to recover $g_1$ and $g_2$ up to a single choice of sign.
In the top row of (\ref{eqn:u}), from $u_{11}=ad'$ and $u_{13}=bd'$ we deduce that
\[
a = \frac{u_{11}}{d'} = \frac{u_{11}}{u_{13}}b.
\]
We can equally well solve for $d'$ in terms of $c$ or $d$:
\[
a= \frac{u_{11}}{u_{31}} c = \frac{u_{11}}{u_{33}} d.
\]
We substitute these into $ad-bc=1$ to solve for $a$:
\begin{align*}
1 = ad-bc = a^2 \Big( \frac{u_{33}}{u_{11}}-\frac{u_{13}u_{31}}{u_{11}^2} \Big) = a^2 u_{11}^{-2} (u_{11}u_{33}-u_{13}u_{31})
.
\end{align*}
There are two solutions for
\begin{equation} \label{eqn:2to1a}
a= \pm \frac{u_{11}}{\sqrt{u_{11}u_{33}-u_{13}u_{31}}},
\end{equation}
with all other entries then determined:
\begin{equation} \label{eqn:u-to-g}
\begin{pmatrix} a & b \\ c & d \end{pmatrix} = \frac{a}{u_{11}} \begin{pmatrix} u_{11} & u_{13} \\ u_{31}& u_{33} \end{pmatrix}, 
\quad
\begin{pmatrix} a' & b' \\ c' & d' \end{pmatrix} = a^{-1} \begin{pmatrix} u_{22} & -u_{21} \\ -u_{12} & u_{11} \end{pmatrix}
.
\end{equation}

There is a whole array of other ways we could have expressed the remaining variables as multiples of $a$.
\[
\begin{aligned}
b&=\frac{u_{13}}{d'}=\frac{u_{13}}{u_{11}} a,& c&=\frac{u_{31}}{d'}=\frac{u_{31}}{u_{11}} a,& d&=\frac{u_{33}}{d'}=\frac{u_{33}}{u_{11}} a& \\
&=-\frac{u_{14}}{c'}=\frac{u_{14}}{u_{12}} a,& &=-\frac{u_{32}}{c'}=\frac{u_{32}}{u_{12}} a,& &=-\frac{u_{34}}{c'}=\frac{u_{34}}{u_{12}} a& \\
&=-\frac{u_{23}}{b'}=\frac{u_{23}}{u_{21}} a,& &=-\frac{u_{41}}{b'}=\frac{u_{41}}{u_{21}} a,& &=-\frac{u_{43}}{b'}=\frac{u_{43}}{u_{21}} a& \\
&=\frac{u_{24}}{a'}=\frac{u_{24}}{u_{22}} a,& &=\frac{u_{42}}{a'}=\frac{u_{42}}{u_{22}} a,& &=\frac{u_{44}}{a'}=\frac{u_{44}}{u_{22}} a&
\end{aligned}
\]
This implies 9 relations among the entries $u_{ij}$
\begin{equation} \label{eqn:9-quadratic-relations}
\left\{
\begin{aligned}
&u_{13}u_{12}=u_{11}u_{14},& &u_{13}u_{21}=u_{11}u_{23},& &u_{13}u_{22}=u_{11}u_{24}& \\
&u_{31}u_{12}=u_{11}u_{32},& &u_{31}u_{21}=u_{11}u_{32},& &u_{31}u_{22}=u_{11}u_{42}& \\
&u_{33}u_{12}=u_{11}u_{34},& &u_{33}u_{21}=u_{11}u_{43},& &u_{33}u_{22}=u_{11}u_{44}&
\end{aligned}
\right.
\end{equation}
A tenth relation follows from $ad-bc=1$, namely
\begin{equation} \label{eqn:det-tenth}
(u_{22}u_{11}-u_{21}u_{12})(u_{11}u_{33}-u_{13}u_{31})=u_{11}^2
.
\end{equation}
Just as $\SL_2 \times \SL_2$ is 6-dimensional, these 10 equations in the 16 variables $u_{ij}$ define a 6-dimensional group.
One can of course eliminate $u_{11}$ in various ways, for instance equating $u_{11}=u_{13}u_{12}/u_{14}$ with $u_{11}=u_{13}u_{21}/u_{23}$ to get
\[
u_{12}u_{23}=u_{13}u_{14}.
\]

\section{Proof of Fricke's identity} \label{sec:profricke}

In this section, we use the map (\ref{eqn:u}) from $\SL_2 \times \SL_2$ to $SO(\det)$ from Section~\ref{sec:sl2sl2spin} to prove (\ref{eqn:fricke-symm}).
The idea is to calculate the characteristic polynomial of $u$ in two ways.
We will show in Proposition~\ref{prop:charpoly} that the coefficient of $x^2$ is
\[
(\tr{A})^2+(\tr{B})^2-2.
\]
On the other hand, for any $n \times n$ matrix $u$, one can write the coefficient of $x^{n-2}$ in $\det(xI-u)$ as
\[
\sum_{i<j} (u_{ii}u_{jj}-u_{ij}u_{ji}).
\]
We will show that this equals the other side of Fricke's identity.

\subsection{Palindromes}

Fricke's identity is closely related to the fact that $\tr(g)=\tr(g^{-1})$ for matrices in $\SL_2$, which underlies (\ref{eqn:trace-inverse}) and follows from it with $X=I$, $Y=g$.
To see something similar for larger matrices, let us recall that characteristic polynomials enjoy the following symmetry.

\begin{proposition} \label{prop:palin}
For any invertible matrix $g$, the characteristic polynomials of $g$ and $g^{-1}$ are related by
\[
\det(xI-g^{-1}) = \frac{(-1)^n}{\det{g}} x^n \det(x^{-1}I-g).
\]
\end{proposition}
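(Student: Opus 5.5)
The plan is to factor $g^{-1}$ out of the matrix $xI-g^{-1}$ and then use multiplicativity of the determinant. We work first over a field, or over the ring of Laurent polynomials in $x$, so that $x^{-1}$ makes sense. For $x\neq 0$ we write
\[
xI-g^{-1} = g^{-1}(xg - I) = -x\,g^{-1}\big(x^{-1}I - g\big).
\]
Taking determinants of the $n\times n$ matrices gives
\[
\det(xI-g^{-1}) = (-x)^n \det(g^{-1})\det(x^{-1}I-g) = \frac{(-1)^n}{\det g}\,x^n \det(x^{-1}I-g),
\]
which is exactly the claimed relation.

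The only point that needs care is making sense of the identity over a general commutative ring with $1$, which is the paper's standing setting. There, $x$ is an indeterminate and $\det g$ is a unit, since $g$ is invertible. Here $x^n\det(x^{-1}I-g)$ is a genuine polynomial: it is the reversed characteristic polynomial $\det(I - xg)$. So we instead verify the polynomial identity
\[
\det(xI-g^{-1}) = \frac{(-1)^n}{\det g}\det(I-xg)\cdot(-1)^n\cdot(-1)^n
\]
by the same factorization, $xI-g^{-1}=-g^{-1}(I-xg)$. Let me check the sign. We have $-g^{-1}(I-xg) = -g^{-1}+xI$, which is correct. Its determinant is $(-1)^n\det(g)^{-1}\det(I-xg)$. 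Also $x^n\det(x^{-1}I-g)=\det(I-xg)$, because $x^n$ can be distributed into each of the $n$ rows. These two facts match the statement.

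Because this argument is a factorization in the matrix ring over $R[x]$, the identity holds in $R[x]$ directly. No step is a real obstacle. The only things to watch are the bookkeeping of the sign $(-1)^n$ and the interpretation of $x^n\det(x^{-1}I-g)$ as the polynomial $\det(I-xg)$.

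As a remark, for $g$ of determinant $1$ (for instance $u$ in $\SO(\det)$ from Section~\ref{sec:sl2sl2spin}) with $n$ even, this says that the coefficients of the characteristic polynomials of $g$ and $g^{-1}$ are reverses of each other. This is the palindromic symmetry that will be used later.
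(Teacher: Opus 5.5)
Your proof is correct and follows the same route as the paper: factor $xI-g^{-1}=-x\,g^{-1}(x^{-1}I-g)$ and take determinants. Your extra step of rewriting $x^n\det(x^{-1}I-g)$ as $\det(I-xg)$, so that the identity holds in $R[x]$, is a sensible refinement the paper omits; the stray factor $(-1)^n\cdot(-1)^n=1$ in your middle display is harmless but should be deleted.
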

\begin{proof}
At the level of matrices,
\[
x-g^{-1} = -g^{-1} x (x^{-1}-g)
\]
and the result follows by taking determinants of both sides.
\end{proof}

In the special case of an orthogonal matrix, it follows that the characteristic polynomial is a palindrome up to sign.
\begin{proposition} \label{prop:palindrome}
If $u^{\top}Qu = Q$ for some invertible matrix $Q$, then
\[
\det(xI-u) = \frac{(-x)^n}{\det{u}} \det(x^{-1}I-u)
.
\]
\end{proposition}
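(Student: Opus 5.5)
The plan is to reduce to Proposition~\ref{prop:palin} by showing that $u^{-1}$ is similar to $u^{\top}$, and hence has the same characteristic polynomial as $u$.

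First, rewrite the hypothesis $u^{\top}Qu=Q$. Since $Q$ is invertible, taking determinants gives $\det(u)^2\det Q=\det Q$, so $\det u=\pm1$; in particular $u$ is invertible. Multiplying on the left by $Q^{-1}$ and on the right by $u^{-1}$ gives
\[
u^{-1} = Q^{-1}u^{\top}Q .
\]
Thus $u^{-1}$ is conjugate to $u^{\top}$, and therefore
\[
\det(xI-u^{-1}) = \det\big(Q^{-1}(xI-u^{\top})Q\big) = \det(xI-u^{\top}) = \det(xI-u),
\]
using that a matrix and its transpose have the same determinant.

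Next, substitute this into Proposition~\ref{prop:palin} with $g=u$:
\[
\det(xI-u) = \det(xI-u^{-1}) = \frac{(-1)^n}{\det u}\, x^n \det(x^{-1}I-u) = \frac{(-x)^n}{\det u}\det(x^{-1}I-u),
\]
which is the claimed identity.

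No step is really an obstacle. The only point needing care is that invertibility of $u$ must be derived from that of $Q$, since it is not stated as a hypothesis; this is handled by the determinant computation in the first step. The identity should be read as one between Laurent polynomials in $x$, or equivalently after clearing $x^n$, so that it holds over any commutative ring, as the paper intends.
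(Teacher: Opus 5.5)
Your argument is correct and is essentially the paper's proof. Like the paper, you derive $u^{-1}=Q^{-1}u^{\top}Q$, use invariance of the determinant under conjugation and transposition to get $\det(xI-u^{-1})=\det(xI-u)$, and substitute into Proposition~\ref{prop:palin}.

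Your explicit check that $u$ is invertible is a welcome addition that the paper leaves implicit. One small correction: over a general commutative ring, $(\det u)^2=1$ only makes $\det u$ a unit, not necessarily $\pm1$. That is all you actually use, so the argument is unaffected.
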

\begin{proof}
Apply the previous proposition with $g=u$, using the special feature that $u^{-1}=Q^{-1}u^{\top}Q$ is conjugate to $u$.

More simply, without even using the fact that $u$ is similar to $u^{\top}$, it is enough to know that a matrix and its transpose have the same determinant. Therefore
\[
\det(x-u^{-1}) = \det(Q^{-1}(x-u^{\top})Q) = \det(x-u^{\top}) = \det(x-u).
\]
\end{proof}

In the orthogonal case, $\det{u}=\pm 1$ by taking determinants of both sides of $u^{\top}Qu=Q$, assuming $Q$ is non-degenerate and working over a field.
Therefore the polynomial $p(x)=\det(xI-u)$ satisfies $p(x)=\pm x^n p(x^{-1})$.
In the cases of interest, $n$ is even and $u \in SO(Q)$. Then $p$ is a palindrome.

\subsection{First way of calculating the characteristic polynomial}

\begin{proposition} \label{prop:charpoly}
For any $(A,B) \in \SL_2 \times \SL_2$, the image $u=u(A,B)$ from (\ref{eqn:u}) has characteristic polynomial $\det(xI-u)$ equal to
\[
x^4 - (\tr{A})(\tr{B})x^3 + \big( (\tr{A})^2+(\tr{B})^2-2) \big)x^2 - (\tr{A})(\tr{B})x + 1
.
\]
\end{proposition}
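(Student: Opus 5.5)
Since $u=u(A,B)=A\otimes B^{-\top}$ by (\ref{eqn:u-block}), the plan is to read off the characteristic polynomial from the eigenvalues of a Kronecker product. Over an algebraic closure of the fraction field (first treating the generic case, where the entries of $A$ and $B$ are indeterminates over $\mathbb{Z}$, so the ring is a domain), let $A$ have eigenvalues $\alpha,\alpha^{-1}$ and $B$ have eigenvalues $\beta,\beta^{-1}$. Then $B^{-\top}$ has the same eigenvalues $\beta^{-1},\beta$, since $\tr(B^{-1})=\tr(B)$ and $\det B^{-1}=1$. After triangularising $A$ and $B^{-\top}$ simultaneously within the tensor product, the eigenvalues of $u$ are the four products $\alpha^{\pm1}\beta^{\pm1}$. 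Hence
\[
\det(xI-u)=(x-\alpha\beta)(x-\alpha^{-1}\beta^{-1})(x-\alpha\beta^{-1})(x-\alpha^{-1}\beta).
\]

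The next step is to expand this in pairs. Write $s=\alpha+\alpha^{-1}=\tr A$ and $t=\beta+\beta^{-1}=\tr B$. The first pair gives $x^2-(\alpha\beta+\alpha^{-1}\beta^{-1})x+1$ and the second gives $x^2-(\alpha\beta^{-1}+\alpha^{-1}\beta)x+1$. Set $P=\alpha\beta+\alpha^{-1}\beta^{-1}$ and $R=\alpha\beta^{-1}+\alpha^{-1}\beta$. Then $P+R=st$ and $PR=\alpha^2+\alpha^{-2}+\beta^2+\beta^{-2}=(s^2-2)+(t^2-2)$. The product $(x^2-Px+1)(x^2-Rx+1)$ equals
\[
x^4-(P+R)x^3+(PR+2)x^2-(P+R)x+1,
\]
which is exactly the claimed polynomial. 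The palindromic shape also matches Proposition~\ref{prop:palindrome}.

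The main obstacle is justifying the argument over an arbitrary commutative ring, where eigenvalues need not exist. I would handle this by universality. Both sides are polynomials with integer coefficients in the eight entries of $A$ and $B$, and the identity holds on the universal domain $\mathbb{Z}[a,b,c,d,a',b',c',d']/(ad-bc-1,\,a'd'-b'c'-1)$. This ring is a domain: it is a tensor product of two copies of the coordinate ring of $\SL_2$, which is smooth and geometrically integral. So the identity holds there by the eigenvalue argument and then specialises to any ring.

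Alternatively, I would avoid eigenvalues entirely by using $\det(xI-g_1\otimes g_2)$ expressed through $\tr$ and $\Lambda^2$. The coefficients of the characteristic polynomial are $\tr\Lambda^k u$, and $\tr\Lambda^2(A\otimes C)=\tr(A)^2\det C+\det A\,\tr(C)^2-2\det A\det C$ is a polynomial identity that can be checked directly on the explicit matrix (\ref{eqn:u}). This is more computational but fully ring-independent.
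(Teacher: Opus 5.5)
Your proof is correct, but it takes a different route from the paper. The paper argues by brute force. It expands $\det(xI-u)$ for the explicit matrix (\ref{eqn:u}) one coefficient at a time and factors each coefficient using $ad-bc=a'd'-b'c'=1$. For instance, the constant term becomes $(a'd'-b'c')^2(ad-bc)^2$, and the $x^2$ coefficient becomes $(a'd'-b'c')(a+d)^2+(ad-bc)(a'+d')^2-2(a'd'-b'c')(ad-bc)$. That computation is valid verbatim over any commutative ring, and it gives the $\GL_2$ version stated after the proposition at no extra cost.

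You instead use $u=A\otimes B^{-\top}$, the fact that eigenvalues multiply under Kronecker products, and symmetric-function bookkeeping. This explains the shape of the answer rather than merely verifying it. Over $\overline{K}$ the polynomial splits as $(x^2-Px+1)(x^2-Rx+1)$ with $P+R=(\tr A)(\tr B)$ and $PR=(\tr A)^2+(\tr B)^2-4$, and the palindromic symmetry is visible immediately.

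The price is the passage from the generic case to arbitrary rings. Your claim that $\mathbb{Z}[a,\dots,d']/(ad-bc-1,\,a'd'-b'c'-1)$ is a domain is true, but the one-line reason is thin: a tensor product of domains over $\mathbb{Z}$ need not be a domain. What makes it work is that the ring is torsion-free over $\mathbb{Z}$ and its generic fibre is the coordinate ring of the connected group $\SL_2\times\SL_2$ over $\mathbb{Q}$.

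You can avoid this issue entirely. Run the same eigenvalue argument for generic matrices in $\GL_2$, with $u=A\otimes\operatorname{adj}(B)^{\top}$ (which is exactly what formula (\ref{eqn:u}) gives), over the polynomial ring $\mathbb{Z}[a,\dots,d']$, which is a domain with no quotient taken. This yields the paper's $\GL_2$ formula, and you then specialise to $\det A=\det B=1$.

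Your fallback, checking $\tr\Lambda^2(A\otimes C)$ directly on the explicit matrix, is essentially the paper's method.
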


The proof also gives a result for matrices in $\GL_2$ instead of $\SL_2$.
Starting from matrices with determinants $\Delta$, $\Delta'$ and traces $\tau$, $\tau'$, the characteristic polynomial would be
\[
x^4 - \tau \tau' x^3 + (\Delta' \tau^2 + \Delta \tau'^2 - 2\Delta \Delta') x^2 - \Delta \Delta' \tau \tau' x + \Delta^2 \Delta'^2.
\]

\begin{proof}
We expand the determinant $\det(xI-u)$ and see what simplifies in view of
\[
ad-bc=a'd'-b'c' = 1.
\]
For the coefficient of $x^3$, we have
\[
\tr(u) = ad'+aa'+dd'+da' = (a+d)(a'+d')=(\tr{A})(\tr{B})
\]
as required.
The constant term is
\begin{align*}
(a'^2d'^2-2a'b'c'd'+b'^2c'^2)a^2d^2 &+ (a'^2d'^2-2a'b'c'd'+b'^2c'^2)b^2c^2 \\
&+ (-2a'^2d'^2+4a'b'c'd'-2b'^2c'^2)abcd
\end{align*}
which can be factored as
\[
(a'd'-b'c')^2(a^2d^2+b^2c^2-2abcd) = (a'd'-b'c')^2(ad-bc)^2=1.
\]

The coefficient of $x$, for $u$ in the orthogonal group, must be the same as the coefficient of $x^3$ by Proposition~\ref{prop:palindrome}.
We can also check directly that the coefficient of $x$ is
\begin{align*}
(-a'd'^2+b'c'd'-a'^2d'+a'b'c' ) ad^2 + \\
(a'd'^2-b'c'd'+a'^2d'-a'b'c' ) bcd + \\
(-a'd'^2+b'c'd'-a'^2d'+a'b'c' ) a^2d \\
(a'd'^2-b'c'd'+a'^2d'-a'b'c' ) abc
\end{align*}
hence
\[
(-a'd'^2+b'c'd'-a'^2d'+a'b'c')(ad^2-bcd+a^2d-abc).
\]
One factor is
\begin{align*}
ad^2-bcd+a^2d-abc &= (ad-bc)d+a(ad-bc) \\
&= (ad-bc)(a+d)
\end{align*}
and similarly for $a',b',c',d'$.
The coefficient of $x$ is therefore
\[
(ad-bc)(a'd'-b'c')(a+d)(a'+d') = (\tr{A})(\tr{B}).
\]
This is the same as for $x^3$, using $ad-bc=a'd'-b'c'=1$.

Finally, the coefficient of $x^2$ is
\begin{align*}
&(a'd'-b'c')d^2 + (d'^2+2a'd'+a'^2)ad -(d'^2+2b'c'+a'^2)bc + (a'd'-b'c')a^2 \\
&=(a'd'-b'c')(a^2+d^2) + (a'+d')^2ad - \big((a'+d')^2-2a'd'\big)bc-2b'c'bc \\
&=(a'd'-b'c')(a^2+d^2) + (a'+d')^2 (ad-bc) + 2(a'd'-b'c')bc \\
&=(a'd'-b'c')\big((a+d)^2-2ad\big) + (a'+d')^2 (ad-bc) + 2(a'd'-b'c')bc \\
&=(a'd'-b'c')(a+d)^2+(ad-bc)(a'+d')^2 - 2(a'd'-b'c')(ad-bc)
\end{align*}
Since $ad-bc=a'd'-b'c'=1$, this is
\[
(a+d)^2+(a'+d')^2-2 = (\tr{A})^2+(\tr{B})^2-2.
\]
\end{proof}

\subsection{Second way of calculating the characteristic polynomial}

Our task is to show
\[
\sum_{i<j} (u_{ii}u_{jj}-u_{ij}u_{ji})=\big((\tr{A})(\tr{B}) - \tr(AB) \big) \tr(AB) + \tr(ABA^{-1}B^{-1})
\]
where
\[
A = \begin{pmatrix} a & b \\ c & d \end{pmatrix}, \quad B=\begin{pmatrix} a' & b' \\ c' & d' \end{pmatrix}, 
\quad
u=\begin{pmatrix} a \begin{pmatrix} d' & -c' \\ -b' & a' \end{pmatrix} & b \begin{pmatrix} d' & -c' \\ -b' & a' \end{pmatrix} \\ c \begin{pmatrix} d' & -c' \\ -b' & a' \end{pmatrix} & d \begin{pmatrix} d' & -c' \\ -b' & a' \end{pmatrix} \end{pmatrix}
\]
in the same notation as before.
The product is
\[
AB = \begin{pmatrix} aa'+bc' & ab'+bd' \\ ca'+dc' & cb'+dd' \end{pmatrix}
=
\begin{pmatrix} u_{22}-u_{14} & u_{13}-u_{21} \\ u_{42}-u_{34} & u_{33}-u_{41} \end{pmatrix}
.
\]
The mixed trace in Fricke's identity is therefore
\[
\tr(AB)= u_{22}+u_{33}-u_{14}-u_{41}.
\]
We have already seen in Proposition~\ref{prop:charpoly} that
\[
(\tr{A})(\tr{B}) = \tr(u) = u_{11}+u_{22}+u_{33}+u_{44}.
\]
Therefore
\[
\big((\tr{A})(\tr{B}) - \tr(AB) \big) \tr(AB) = (u_{11}+u_{44}+u_{14}+u_{41})(u_{22}+u_{33}-u_{14}-u_{41})
.
\]
Before turning to the commutator, we can already begin comparing this to
\begin{align}
\sum_{i<j} (u_{ii}u_{jj}-u_{ij}u_{ji}) = &u_{11}u_{22}+u_{11}u_{33}+u_{22}u_{44}+u_{33}u_{44}-u_{14}u_{41}+ \nonumber \\
&u_{22}u_{33}-u_{12}u_{21}-u_{13}u_{31}-u_{24}u_{42}-u_{34}u_{43} + \label{ij}\\
&u_{11}u_{44}-u_{23}u_{32} . \nonumber
\end{align}
The first line contains terms that also appear after expanding
\[
 (u_{11}+u_{44}+u_{14}+u_{41})(u_{22}+u_{33}-u_{14}-u_{41}).
\]
The second line can be matched with terms that occur in $\tr(ABA^{-1}B^{-1})$, as we will see next.

Like $AB$, we have
\[
A^{-1}B^{-1} = \begin{pmatrix} dd'+bc' & -db'-ba' \\ -cd'-ac' & cb'+aa' \end{pmatrix} =
\begin{pmatrix} u_{33}-u_{14} & u_{43}-u_{24} \\ u_{12}-u_{31} & u_{22}-u_{41} \end{pmatrix}
.
\]
The commutator is
\[
ABA^{-1}B^{-1} = \begin{pmatrix} u_{22}-u_{14} & u_{13}-u_{21} \\ u_{42}-u_{34} & u_{33}-u_{41} \end{pmatrix}
\begin{pmatrix} u_{33}-u_{14} & u_{43}-u_{24} \\ u_{12}-u_{31} & u_{22}-u_{41} \end{pmatrix}
\]
with trace
\begin{align*}
\tr(ABA^{-1}B^{-1}) =&(u_{22}-u_{14})(u_{33}-u_{14})+(u_{13}-u_{21})(u_{12}-u_{31}) \\
&+ (u_{42}-u_{34})(u_{43}-u_{24})+(u_{33}-u_{41})(u_{22}-u_{41})
.
\end{align*}
Some of these terms cancel with $\big(\tr(A)\tr(B)-\tr(AB) \big)\tr(AB)$.
Others cancel with $\sum_{i<j} (u_{ii}u_{jj}-u_{ij}u_{ji})$ as stated before.
Let us indicate which by writing
\begin{align}
&\tr(ABA^{-1}B^{-1}) = \label{c-trace} \\
&u_{14}^2+u_{41}^2-u_{22}u_{14}-u_{33}u_{14}-u_{22}u_{41}-u_{33}u_{41}+ \nonumber \\
&2 u_{22}u_{33}-u_{12}u_{21}-u_{13}u_{31}-u_{24}u_{42}-u_{34}u_{43} + \nonumber \\
&u_{12}u_{13}+u_{21}u_{31}+u_{42}u_{43}+u_{34}u_{24} \nonumber
.
\end{align}
The first line occurs in $(u_{11}+u_{44}+u_{14}+u_{41})(u_{22}+u_{33}-u_{14}-u_{41})$ but with the opposite sign.
The second line of (\ref{c-trace}) matches the second line of (\ref{ij}).
After these cancellations, what we have to show is
\begin{align*}
u_{11}u_{44} - u_{23}u_{32} = &u_{22}u_{33} - u_{14}u_{41} \\
&-u_{44}u_{14}-u_{44}u_{41}-u_{11}u_{14}-u_{11}u_{41} \\
&+u_{24}u_{34}+u_{42}u_{43}+u_{12}u_{13} + u_{21}u_{31}
.
\end{align*}
This follows from the quadratic relations (\ref{eqn:9-quadratic-relations}) among the $u_{ij}$.
Indeed, we have written it this way because the top lines are already equal, and everything cancels out between the second and third lines.

On the top line,
\[
u_{11}u_{44} - u_{23}u_{32} = ad'da'-bb'cc' = aa'dd' - bc'cb' = u_{22}u_{33} - u_{14}u_{41}
.
\]
Between the second and third lines, we have for example
\[
u_{24}u_{34}-u_{44}u_{14}=-ba'dc'+da'bc' = 0.
\]
In the same way,
\begin{align*}
u_{42}u_{43}-u_{44}u_{41} &= -ca'db'+da'cb' = 0\\
u_{12}u_{13}-u_{11}u_{14} &= -ac'bd'+ad'bc' = 0 \\
u_{21}u_{31}-u_{11}u_{41} &= -ab'cd'+ad'cb' = 0
\end{align*}
completing the proof.
\qed

\section{Smaller examples} \label{sec:smaller-examples}

In this section, we explain how some easier identities can be seen as parallel to Fricke's.
These are the fact that, for $A$ in $\SL_2$,
\begin{equation} \label{eqn:trace-square}
\tr(A^2)  = (\tr{A})^2 - 2
\end{equation}
as well as the double-angle formula
\begin{equation}
\cos{2\theta} = 2(\cos{\theta})^2 - 1.
\end{equation}
The addition formula for $\cos(\theta+\theta')$ can also be seen as a case of Fricke's identity.

\subsection{The spin cover $\SL_2 \rightarrow SO(2,1)$}

The diagonal case $A=B$ of Fricke's identity is
\[
2(\tr{A})^2 + (\tr{A^2})^2 = (\tr{A})^2 \tr(A^2) + 4
\]
or
\[
(\tr{A^2}-2)(\tr{A^2}+2)=(\tr{A^2}-2)(\tr{A})^2.
\]
Assuming $\tr{A^2} \neq 2$ and working over an integral domain, cancelling gives (\ref{eqn:trace-square}).
Of course, it can also be verified directly without Fricke's identity:
\[
\tr{A^2} = a^2+d^2+2bc = (a+d)^2 - 2(ad-bc) = (\tr{A})^2-2.
\]
One can also obtain (\ref{eqn:trace-square}) by taking $X=Y=A$ in (\ref{eqn:trace-inverse}).

The identity (\ref{eqn:trace-square}) can also be interpreted as two ways of computing a characteristic polynomial, as in Section~\ref{sec:profricke}.
In terms of the characteristic polynomial, (\ref{eqn:trace-square}) says
\begin{equation} \label{eqn:trace-square1}
(\tr{A})^2-1 = \tr(A^2)+1.
\end{equation}
Let us work over a field of characteristic not equal to 2.
The diagonal $\SL_2 \subset \SL_2 \times \SL_2$ acts by conjugation $m \mapsto gmg^{-1}$, preserving both $\det(m)$ and $\tr(m)$.
It therefore preserves
\[
4\det(m) - \tr(m)^2
\]
which gives a map from $\SL_2$ to an orthogonal group in three variables.
Writing the quadratic form this way is convenient because
\begin{equation} \label{eqn:det-22}
ps-qr = \frac{1}{4}(4ps - 4qr) = \frac{1}{4}\big( (p+s)^2-(p-s)^2-(q+r)^2+(q-r)^2 \big) 
\end{equation}
where $p+s=\tr(m)$.
In this way, the diagonal $\SL_2$ preserves
\[
4\det(m)-\tr(m)^2 = -(p-s)^2-(q+r)^2+(q-r)^2.
\]

On the matrices of trace 0, the two forms $\det(m)$ and $\det(m)-\frac{1}{4}(\tr{m})^2$ coincide.
In the coordinates above, the subspace where $\tr{m}=0$ is given by $s=-p$.
The action
\begin{align*}
&\begin{pmatrix} a & b \\ c & d \end{pmatrix}\begin{pmatrix} p & q \\ r & -p \end{pmatrix}\begin{pmatrix} a & b \\ c & d \end{pmatrix}^{-1}
 \\
&=\begin{pmatrix} apd+brd-aqc+bpc & -apb-brb+aqa-bpa \\ cpd+drd-cqc+dpc & -cpb-drb+cqa-dpa \end{pmatrix}
\end{align*}
is represented by
\[
\begin{pmatrix} p \\ q \\ r \end{pmatrix} \mapsto \begin{pmatrix} ad+bc & -ac & bd \\ -2ab & a^2 & -b^2 \\ 2cd & -c^2 & d^2 \end{pmatrix} \begin{pmatrix} p \\ q \\ r \end{pmatrix}
.
\]

\begin{proposition}
For $A$ in $\SL_2$ acting by conjugation on the matrices of trace $0$, the characteristic polynomial is
\[
x^3 - \big( (\tr{A})^2-1 \big)x^2 +\big( (\tr{A})^2-1 \big) x - 1
.
\]
\end{proposition}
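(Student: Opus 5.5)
The plan is to compute the characteristic polynomial of the $3\times 3$ matrix
\[
v = \begin{pmatrix} ad+bc & -ac & bd \\ -2ab & a^2 & -b^2 \\ 2cd & -c^2 & d^2 \end{pmatrix}
\]
coefficient by coefficient, simplifying throughout with $ad-bc=1$, exactly as in the proof of Proposition~\ref{prop:charpoly}.

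For $\det(xI-v)=x^3-\tr(v)x^2+e_2(v)x-\det(v)$, start with the trace. We have $\tr(v)=ad+bc+a^2+d^2$. Writing $bc=ad-1$ gives $a^2+2ad+d^2-1=(\tr{A})^2-1$, which is the claimed coefficient of $x^2$.

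For the constant term, use the fact that the action preserves the nondegenerate form $\det(m)=-p^2-qr$ on trace-zero matrices; in characteristic $\neq 2$ this form has Gram matrix with nonzero determinant. Hence $\det(v)=\pm1$. To pin down the sign, either expand the $3\times3$ determinant directly, or argue that $\det(v)$ is a polynomial in $a,b,c,d$ equal to $1$ at $A=I$. I would simply expand: the determinant should factor as $(ad-bc)^3=1$. This matches the constant term $-1$.

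For the coefficient of $x$, the plan is to invoke Proposition~\ref{prop:palindrome}. Here $n=3$ and $\det v=1$, so $p(x)=-x^3p(x^{-1})$. Writing $p(x)=x^3-\alpha x^2+\beta x-1$, this antipalindromic symmetry forces $\beta=\alpha$, giving $\beta=(\tr{A})^2-1$. Applying the proposition requires knowing $v^{\top}Qv=Q$ for the Gram matrix $Q$ of $-p^2-qr$. This holds because conjugation preserves $\det$ and maps trace-zero matrices to trace-zero matrices.

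As a check, I would also verify $\beta$ directly as the sum of principal $2\times2$ minors:
\[
\big((ad+bc)a^2-2a^2bc\big)+\big((ad+bc)d^2-2bcd^2\big)+\big(a^2d^2-b^2c^2\big).
\]
This simplifies to $(a^2+d^2)(ad-bc)+(ad-bc)(ad+bc)=a^2+d^2+ad+bc$, which equals $\tr(v)$ again.

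The main obstacle is the sign and factorisation of $\det(v)$. The palindrome argument only gives $\pm1$, so the direct expansion, which should collapse to $(ad-bc)^3$, is the step to carry out carefully.
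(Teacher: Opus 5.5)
Your proposal is correct and is essentially the paper's proof. The paper also expands $\det(xI-v)$ directly, without imposing $ad-bc=1$, obtaining $x^3-\big((\tr{A})^2-\det{A}\big)x^2+\big(\det(A)(\tr{A})^2-(\det{A})^2\big)x-(\det{A})^3$; your $\det v=(ad-bc)^3$ and $e_2(v)=(ad-bc)(a^2+d^2+ad+bc)$ are specialisations of this, and the palindrome route to the $x$-coefficient is valid but unnecessary (it needs $2$ invertible to get $v^{\top}Qv=Q$), since your direct sum of principal minors already settles it over any commutative ring.
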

\begin{proof}
Direct calculation, without even using the relation $ad-bc=1$, gives
\[
x^3 - \big( (\tr{A})^2-\det{A}\big)x^2 + \big( \det(A) (\tr{A})^2 - (\det{A})^2 \big)x - (\det{A})^3.
\]
\end{proof}

\subsection{Double-angle formula}

The trigonometric identity
\begin{equation} \label{eqn:so2}
2\cos^2{\theta} - 1 = \cos{2\theta}
\end{equation}
can be seen as an instance of (\ref{eqn:trace-square})
where $A = \begin{pmatrix} \cos{\theta} & -\sin{\theta} \\ \sin{\theta} & \cos{\theta} \end{pmatrix}$ is a rotation matrix. 
However, $A^2$ is the rotation by twice the angle. There is a factor of 2 between (\ref{eqn:so2}) and what directly comes out of (\ref{eqn:trace-square}), as in $(2\cos{\theta})^2-2 =2\cos{2\theta}$.
This is reminiscent of the extra factors that arise when taking $A=B$ in Fricke's identity to deduce (\ref{eqn:trace-square}).

The connection goes somewhat further than simply applying (\ref{eqn:trace-square}) to a particular matrix.
Again, one can state it in terms of characteristic polynomials.
We restrict $\det(m)$ further to the subspace where, as before, $\tr(m)=0$ and now also $q=r$.
This is a 2-dimensional space of matrices
\[
m = p \begin{pmatrix} 1 & 0 \\ 0 & -1 \end{pmatrix} + q \begin{pmatrix} 0 & 1 \\ 1 & 0 \end{pmatrix}
\]
with a quadratic form
\[
\det(m) = -p^2 - q^2.
\]
The condition $q=r$ is not preserved under conjugation by $\SL_2$, but only by a 1-dimensional group.
We have
\begin{align*}
&\begin{pmatrix} a & b \\ c & d \end{pmatrix} \begin{pmatrix} p & q \\ q & -p \end{pmatrix} \begin{pmatrix} d & -b \\ -c & a \end{pmatrix} \\
&=\begin{pmatrix} (ad+bc)p+(bd-ac)q & -2abp + (a^2-b^2)q \\ 2cdp + (d^2-c^2)q & -(ad+bc)p-(bd-ac)q \end{pmatrix}
.
\end{align*}
The off-diagonal entries are equal to each other, assuming 3 equations in the 4 variables $a,b,c,d$ are satisfied:
\[
\left\{
\begin{aligned}
cd &=-ab \\
d^2-c^2 &= a^2-b^2 \\
ad-bc &=1
\end{aligned}
\right.
\]
These define a subgroup
\[
\begin{pmatrix} a & b \\ c & d \end{pmatrix} = \begin{pmatrix} a & \mp c \\ c & \pm a \end{pmatrix},
\quad a^2+c^2 = 1
\]
and we may as well consider the identity component $\begin{pmatrix} a & -c \\ c & a \end{pmatrix}$.
This subgroup acts on $\begin{pmatrix} p \\ q \end{pmatrix}$ by
\[
\begin{pmatrix} ad+bc & bd-ac \\ -2ab & a^2-b^2 \end{pmatrix} = \begin{pmatrix} a^2-c^2 & -2ac \\ 2ac & a^2-c^2 \end{pmatrix}.
\]

Since $a^2+c^2=1$, this matrix has characteristic polynomial
\[
x^2 - 2(2a^2-1) x + 1.
\]
Setting $a=\cos{\theta}$ and $c=\sin{\theta}$, this matrix becomes
\[
\begin{pmatrix} \cos{2\theta} & -\sin{2\theta} \\ \sin{2\theta} & \cos{2\theta} \end{pmatrix}
= A^2
\]
where $A=\begin{pmatrix} a & b \\ c & d \end{pmatrix}$ is the rotation matrix of half the angle, with trace $2\cos{\theta}$.

Like $(\tr{A})^2+(\tr{B})^2-2$ in Fricke's identity, one side of (\ref{eqn:so2}) appears as the middle coefficient of the characteristic polynomial of the image in an orthogonal group.

\subsection{Addition formula for cosines}

Taking both $A$ and $B$ to be rotation matrices, we find from Fricke's identity that
\[
(2\cos{\theta})^2+(2\cos{\theta'})^2 + \big(2\cos(\theta + \theta')\big)^2 = 8 \cos{\theta} \cos{\theta'} \cos(\theta+\theta') + 4
.
\]
This is a quadratic equation for $\cos(\theta+\theta')$, with solution
\[
\cos(\theta + \theta') = \cos{\theta} \cos{\theta'} \pm \sqrt{(1-\cos^2{\theta})(1-\cos^2{\theta'})}
\]
which agrees with the usual formula in terms of $\sin{\theta}=\sqrt{1-\cos^2{\theta}}$.

\section{Larger examples} \label{sec:symplectic}

\subsection{An identity for symplectic matrices}
The symplectic form given by the matrix $\begin{pmatrix} & I \\ -I & \end{pmatrix}$ is preserved by $2n \times 2n$ matrices of the form
$
\begin{pmatrix} A & B \\ C & D \end{pmatrix}
$
where the blocks $A,B,C,D$ are $n \times n$ and satisfy
\begin{equation} \label{eqn:symplectic-abcd}
\left\{ 
\begin{aligned}
A^{\top}C &= C^{\top} A \\
B^{\top}D &= D^{\top} B \\
A^{\top}D - C^{\top}B &= I
\end{aligned}
\right.
\end{equation}
Indeed, a block matrix takes the symplectic form to
\[
\begin{pmatrix} A & B \\ C & D \end{pmatrix}^{\top}
\begin{pmatrix} & I \\ -I & \end{pmatrix}
\begin{pmatrix} A & B \\ C & D\end{pmatrix}
=
\begin{pmatrix}
A^{\top}C-C^{\top}A & A^{\top}D-C^{\top}B \\ -(A^{\top}D-C^{\top}B)^{\top} & B^{\top}D - D^{\top}B
\end{pmatrix}
.
\]
Here and below, we use the convention that an empty entry in a matrix denotes a block matrix $0$ of the appropriate size.

The next proposition generalises (\ref{eqn:trace-square}).
For a related identity, see \cite[Corollary 4.2]{GKW} where the authors obtain a generalisation of (\ref{eqn:trace-inverse}) for matrices over non-commutative rings.
\begin{proposition} \label{prop:symplectic}
For any $2n \times 2n$ symplectic matrix $\begin{pmatrix} A & B \\ C & D \end{pmatrix}$
\begin{equation} \label{eqn:symplectic}
\tr \Big( \begin{pmatrix} A^{\top} & B^{\top} \\ C^{\top} & D^{\top} \end{pmatrix}\begin{pmatrix} A & B \\ C & D \end{pmatrix} \Big) = \tr\big( (A+D)^{\top} (A+D) \big) - 2n
.
\end{equation}
\end{proposition}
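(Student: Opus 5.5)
The plan is to expand both sides of (\ref{eqn:symplectic}) blockwise and compare them. Only the third relation $A^{\top}D - C^{\top}B = I$ from (\ref{eqn:symplectic-abcd}) should be needed.

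First, note that the left factor $\begin{pmatrix} A^{\top} & B^{\top} \\ C^{\top} & D^{\top} \end{pmatrix}$ is the blockwise transpose of the matrix, not its full transpose. Only the diagonal blocks of the product contribute to its trace. These blocks are $A^{\top}A + B^{\top}C$ in the top left and $C^{\top}B + D^{\top}D$ in the bottom right. Hence the left side of (\ref{eqn:symplectic}) equals
\[
\tr(A^{\top}A) + \tr(D^{\top}D) + \tr(B^{\top}C) + \tr(C^{\top}B).
\]
Expanding the right side gives
\[
\tr(A^{\top}A) + \tr(D^{\top}D) + \tr(A^{\top}D) + \tr(D^{\top}A) - 2n.
\]
The terms $\tr(A^{\top}A)+\tr(D^{\top}D)$ cancel. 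What remains to show is
\[
\tr(B^{\top}C) + \tr(C^{\top}B) = \tr(A^{\top}D) + \tr(D^{\top}A) - 2n.
\]

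To prove this, take the trace of $A^{\top}D - C^{\top}B = I$, which gives $\tr(A^{\top}D) - \tr(C^{\top}B) = n$. Transposing the same relation gives $D^{\top}A - B^{\top}C = I$, and taking the trace gives $\tr(D^{\top}A) - \tr(B^{\top}C) = n$. Adding these two equations yields exactly the required identity.

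I expect no real obstacle here. The only point needing care is the bookkeeping of which factor is transposed, so that the correct diagonal blocks are identified. It is also worth remarking that the first two relations in (\ref{eqn:symplectic-abcd}) are not used. Finally, for $n=1$ the computation specialises to the $\SL_2$ case, where the off-diagonal products reduce to $2bc = 2ad-2$. This recovers (\ref{eqn:trace-square}) in the form $a^2+d^2+2bc = (a+d)^2 - 2$, which confirms that the statement generalises it.
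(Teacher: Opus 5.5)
Your proof is correct and follows essentially the same route as the paper. Both expand the block product, keep its diagonal blocks, and use only $A^{\top}D - C^{\top}B = I$ together with its transpose. The only difference is order: the paper substitutes $B^{\top}C = D^{\top}A - I$ and $C^{\top}B = A^{\top}D - I$ into the diagonal blocks before taking the trace, whereas you take traces first.
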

\begin{proof}
In block form, one can calculate as in the $2 \times 2$ case that
\[
\begin{pmatrix} A^{\top} & B^{\top} \\ C^{\top} & D^{\top} \end{pmatrix}\begin{pmatrix} A & B \\ C & D \end{pmatrix}
=
\begin{pmatrix}
A^\top A+B^\top C & A^\top B + B^\top D \\ C^\top A+ D^\top C& C^\top B + D^\top D
\end{pmatrix}
.
\]
Substituting $A^{\top}D - C^{\top}B = I$ and its transpose, we obtain
\[
\begin{pmatrix} A^{\top} & B^{\top} \\ C^{\top} & D^{\top} \end{pmatrix}\begin{pmatrix} A & B \\ C & D \end{pmatrix}
=
\begin{pmatrix}
A^\top A+D^\top A-I & A^\top B + B^\top D \\ C^\top A+ D^\top C& A^\top D-I + D^\top D
\end{pmatrix}
.
\]
The trace of this matrix is
\begin{align*}
&\tr(A^\top A + D^\top A-I) + \tr(A^\top D + D^\top D - I) = \\
&\tr(A^\top (A+D) + D^\top (A+D)) - 2\tr(I) = \tr\big( (A+D)^\top (A+D) \big) - 2n.
\end{align*}
\end{proof}

For $n=1$, we have $A^\top=A$ and so on for all of these $1 \times 1$ matrices.
The identity (\ref{eqn:symplectic}) reduces to
$
\tr(g^2) = (\tr{g})^2-2
$
for $g \in \SL_2$, that is, (\ref{eqn:trace-square}).

\begin{proposition} \label{prop:symplectic-double}
If $\begin{pmatrix} a_1 & b_1 \\ c_1 & d_1 \end{pmatrix}$ and $\begin{pmatrix} a_2 & b_2 \\ c_2 & d_2 \end{pmatrix}$ are symplectic $2n \times 2n$ matrices, then
\[
\begin{pmatrix} a_1 & & b_1 & \\
& a_2 & & b_2 \\
c_1 & & d_1 \\
& c_2 & & d_2
\end{pmatrix}
\]
is a symplectic $4n \times 4n$ matrix.
\end{proposition}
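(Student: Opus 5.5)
The plan is to verify the three block conditions (\ref{eqn:symplectic-abcd}) directly for the $4n\times 4n$ matrix, viewed as a $2\times 2$ block matrix with $2n\times 2n$ blocks. Here each $\begin{pmatrix} a_i & b_i \\ c_i & d_i\end{pmatrix}$ is a $2n\times 2n$ symplectic matrix with $n\times n$ blocks. So $a_i,b_i,c_i,d_i$ satisfy (\ref{eqn:symplectic-abcd}) with identity $I_n$. The large matrix has $2n \times 2n$ blocks
\[
A=\begin{pmatrix} a_1 & \\ & a_2\end{pmatrix},\quad B=\begin{pmatrix} b_1 & \\ & b_2\end{pmatrix},\quad C=\begin{pmatrix} c_1 & \\ & c_2\end{pmatrix},\quad D=\begin{pmatrix} d_1 & \\ & d_2\end{pmatrix}.
\]
It is symplectic for the form $\begin{pmatrix} & I_{2n} \\ -I_{2n} & \end{pmatrix}$ exactly when these blocks satisfy (\ref{eqn:symplectic-abcd}) with $I=I_{2n}$, by the block computation preceding Proposition~\ref{prop:symplectic}.

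The key step is that block-diagonal matrices multiply and transpose blockwise. Hence
\[
A^\top C=\begin{pmatrix} a_1^\top c_1 & \\ & a_2^\top c_2\end{pmatrix},\qquad C^\top A=\begin{pmatrix} c_1^\top a_1 & \\ & c_2^\top a_2\end{pmatrix},
\]
and these agree because $a_i^\top c_i=c_i^\top a_i$ for each $i$. The same argument gives $B^\top D=D^\top B$. Likewise $A^\top D-C^\top B=\operatorname{diag}(a_1^\top d_1-c_1^\top b_1,\ a_2^\top d_2-c_2^\top b_2)=\operatorname{diag}(I_n,I_n)=I_{2n}$. This gives all three conditions.

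There is no real obstacle; the only point needing care is bookkeeping of the ordering. The displayed $4n\times 4n$ matrix is, in its given coordinate order, already the block form $\begin{pmatrix} A & B\\ C & D\end{pmatrix}$ above, so no permutation is needed. This is the one thing I would double-check: the ambient form is the standard one with $I_{2n}$ in its off-diagonal blocks, which matches interleaving the two copies of $\begin{pmatrix} & I_n\\ -I_n & \end{pmatrix}$ in this order.
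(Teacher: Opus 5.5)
Your proposal is correct and follows essentially the same route as the paper. Like the paper, you write the $4n\times 4n$ matrix with block-diagonal $2n\times 2n$ blocks $A,B,C,D$ and check the three conditions of (\ref{eqn:symplectic-abcd}) blockwise, and your remark that the form $\begin{pmatrix} & I_{2n} \\ -I_{2n} & \end{pmatrix}$ is the interleaved direct sum of the two smaller forms is accurate, though the paper leaves it implicit.
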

\begin{proof}
Since $a_1^{\top}c_1 = c_1^{\top} a_1$, we have
\[
\begin{pmatrix} a_1^{\top} & \\  &a _2^{\top}  \end{pmatrix} \begin{pmatrix}  c_1 & \\  &  c_2 \end{pmatrix}
=
\begin{pmatrix} c_1^{\top} & \\  &c _2^{\top}  \end{pmatrix} \begin{pmatrix}  a_1 & \\  &  a_2 \end{pmatrix}
=\begin{pmatrix} c_1 & \\ & c_2 \end{pmatrix}^{\top} \begin{pmatrix}  a_1 & \\  &  a_2 \end{pmatrix}
.
\]
Similarly for $b$ and $d$. Finally,
\begin{align*}
A^{\top}D-C^{\top}B &= \begin{pmatrix} a_1^{\top} & \\ & a_2^{\top} \end{pmatrix}
\begin{pmatrix} d_1 & \\ & d_2 \end{pmatrix}
-
\begin{pmatrix} c_1^{\top} & \\ & c_2^{\top} \end{pmatrix}
\begin{pmatrix} b_1 & \\ & b_2 \end{pmatrix} \\
&= \begin{pmatrix} a_1^{\top}d_1 - c_1^{\top}b_1 & \\ & a_2^{\top}d_2-c_2^{\top}b_2 \end{pmatrix}
\\
&= \begin{pmatrix} I & \\ & I \end{pmatrix}
.
\end{align*}
\end{proof}
The identity (\ref{eqn:symplectic}) can be applied to $\begin{pmatrix} a & b \\ c & d \end{pmatrix}$ or to $\begin{pmatrix} a & & b & \\ & a & & b \\ c & & d & \\ & c & & d \end{pmatrix}$, and the results are the same after cancelling a factor of 2, similar to the extra factors from Section~\ref{sec:smaller-examples}.

Apply (\ref{eqn:symplectic}) to $\begin{pmatrix} a & & b & \\ & a' & & b'\\ c & & d & \\ & c' & & d' \end{pmatrix}$, where $n=2$ and $g=\begin{pmatrix} a & b \\ c & d \end{pmatrix}$ and $g'=\begin{pmatrix} a' & b' \\  c' & d' \end{pmatrix}$ both belong to $\SL_2$.
The quantity on the right becomes
\[
\tr\big( (A+D)^{\top}(A+D) \big) -2n = \tr \begin{pmatrix} (a+d)^2 & \\ & (a'+d')^2 \end{pmatrix}-4 = (\tr{g})^2+(\tr{g'})^2-4
\]
We know from (\ref{eqn:fricke}) that this must equal
\[
-(\tr{gg'})^2+(\tr{g})(\tr{g})(\tr{gg'})+\tr{gg'g^{-1}g'^{-1}}-2.
\]
Reversing the logic, one could start from here and follow this as another route to proving Fricke's identity.

\subsection{The final exceptional spin group}

Let us work over a field $k$.
The special linear group $\SL_{4n}(k)$ preserves the $4n\times 4n$ determinant, hence preserves the bilinear form on $\bigwedge^{2n} k^{4n}$ given by
\[
\langle v_1 \wedge \ldots \wedge v_{2n}, v_{2n+1} \wedge \ldots \wedge v_{4n} \rangle = \det \begin{pmatrix} v_1 & \cdots & v_{4n} \end{pmatrix}
\]
and extended by linearity.
Here we think of $v_1,\ldots,v_{4n}$ as column vectors, and the form is symmetric because $2n$ is even.
We compose the map from Proposition~\ref{prop:symplectic-double} with the inclusion of $\Sp_{4n}$ into $\SL_{4n}$ in order to obtain an orthogonal representation:
\[
\Sp_{2n} \times \Sp_{2n} \rightarrow \Sp_{4n} \rightarrow \SL_{4n} \rightarrow O\big({\bigwedge}^{2n} k^{4n} \big)
\]

For $n=1$, we have
\[
\dim \SL_{4} = \dim O\big( {\bigwedge}^2 k^4 \big) = 15
\]
but for $n > 1$ the orthogonal group is much larger than $\SL_{4n}$.
\begin{center}
\begin{tabular}{c|c|c}
$n$ & $\dim \SL_{4n} = (4n)^2-1$ & $\dim O(\wedge^{2n} k^{4n})= \binom{\binom{4n}{2n}}{2}$ \\ \hline
1 & 15 & 15 \\
2 & 63 & 2415 \\
3 & 143 & 426426 \\
\end{tabular}
\end{center}
%gp > for(n=1,5,print([n,16*n^2-1,binomial(binomial(4*n,2*n),2)]))
%[1, 15, 15]
%[2, 63, 2415]
%[3, 143, 426426]
%[4, 255, 82812015]
%[5, 399, 17067297390]

Let $e_1,\ldots,e_4$ be a basis for $k^4$.
The pure wedges $e_i \wedge e_j$ form a basis for $\bigwedge^2 k^4$, which we put in the lexicographic order $12<13<14<23<24<34$.
In this basis, $g$ acts by a matrix with entries
\[
(\wedge^2 g)_{\alpha \beta,ij} = g_{\alpha i}g_{\beta j}-g_{\beta i}g_{\alpha j}
.
\]
In matrix form, for matrices of the form
$
g=\begin{pmatrix} a & & b & \\ & a' & & b' \\ c & & d & \\ & c' & & d' \end{pmatrix}
\in \SL_2 \times \SL_2 \subset \Sp_4
$
we find that
\[
\wedge^2 g = \begin{pmatrix} aa' & & ab' & -ba' & & bb' \\
& 1 & & & & & \\
ac' & & ad' & -bc' & & bd' \\
-ca' & & -cb' & da' & & -db' \\
&&&& 1 & \\
cc' & & cd' & -dc' & & dd'
\end{pmatrix}
\]
which differs somewhat from (\ref{eqn:u}).
Still, the entries obey quadratic relations like (\ref{eqn:9-quadratic-relations}) and one could try $\wedge^2 g$ instead of $u$ to give a similar proof of Fricke's identity.

\section*{Acknowledgements}

Thanks to the Banff International Research Station, where this work was first presented at the workshop
\href{https://www.birs.ca/events/2025/5-day-workshops/25w5411}{\textit{Perspectives on Markov Numbers}} (\url{https://www.birs.ca/events/2025/5-day-workshops/25w5411}).
Thanks to Dani Kaufman for a helpful conversation at BIRS pointing out reference \cite{GKW}.

MdCI was supported by the Knut and Alice Wallenberg Foundation (Wallenberg Initiative for Networks and Quantum Information) and by Stiftelsen Hierta Retzius through the Royal Swedish Academy of Sciences.

\end{document}